\def\A{\mathcal A}
\def\B{\mathcal B}
\def\B{\mathcal B}
\def\M{\mathcal M}
\def\N{\mathcal N}
\def\R{\mathcal R}
\def\CCC{\mathbb C}
\def\NNN{\mathbb N}
\def\RRR{\mathbb R}
\def\amslatex{$\mathcal{A}\kern-.1667em\lower.5ex\hbox{$\mathcal{M}$}\kern-.125em\mathcal{S}$-\LaTeX}
\newtheorem{set}{set}[section]
\newtheorem{Lemma}[set]{Lemma}
\newtheorem{Theorem}[set]{Theorem}
\newcommand{\define}{\mathrel{\hbox{$\equiv$\hskip -.90em \lower .47ex \hbox{$\leftharpoondown$}}}}
\newcommand{\enifed}{\mathrel{\hbox{$\equiv$\hskip -.90em \lower .47ex \hbox{$\rightharpoondown$}}}}
\numberwithin{equation}{section}
\newcommand{\LeftEqNo}{\let\veqno\@@leqno}
\begin{document}
\begin{CJK}{UTF8}{<font>}

\title{On Diximier's averaging theorem for operators in type ${\rm II}_1$ factors}

\author{Shilin Wen}
%    Address of record for the research reported here
%    Current address
\address{School of Mathematics and Information Sciences, China West Normal University, Nanchong, Sichuan, 637000, China}
\email{shilinwen127@hotmail.com}
\thanks{Shilin Wen was partly supported by NSFC(Grant No.12001437) and  the Fundamental Research Funds of the China West Normal University (21E027)}

\author{Junsheng Fang}
%    Address of record for the research reported here
%    Current address
\address{School of Mathematical Sciences, Hebei Normal University, Shijiazhuang, Hebei, 050024, China}
\email{jfang@hebtu.edu.cn}
\thanks{Junsheng Fang was partly supported by NSFC(Grant No.12071109) and a Start-up funding of Hebei Normal University.}

\author{Zhaolin Yao}
%    Address of record for the research reported here
%    Current address
\address{School of Mathematical Sciences, Hebei Normal University, Shijiazhuang, Hebei, 050024, China}
\email{zyao@hebtu.edu.cn}
\thanks{Zhaolin Yao was partly supported by Science Foundation of Hebei Normal University(Grant No.13113115)}
\date{}
\maketitle
\begin{abstract}
Let $\M$ be a type ${\rm II_1}$ factor and let $\tau$ be the faithful normal tracial state on $\M$. In this paper, we prove that given finite elements $X_1,\cdots X_n \in \M$, there is a finite decomposition
of the identity into $N \in \NNN$ mutually orthogonal nonzero projections $E_j\in\M$, $I=\sum_{j=1}^NE_j$, such that $E_jX_iE_j=\tau(X_i) E_j$ for all $j=1,\cdots,N$ and $i=1,\cdots,n$. Equivalently,  there is a
unitary operator $U \in \M$ such that
$\frac{1}{N}\sum_{j=0}^{N-1}{U^*}^jX_iU^j=\tau(X_i)I$ for $i=1,\cdots,n$. This result is a stronger version of Dixmier's averaging theorem for type ${\rm II}_1$ factors. As the first application, we show that all elements of trace zero in a type ${\rm II}_1$ factor are single commutators and any self-adjoint elements of trace zero are single self-commutators. This result answers affirmatively Question 1.1 in~\cite{DS}. As the second application, we prove that any self-adjoint element in a type ${\rm II}_1$ factor can be written a linear combination of 4 projections. This result answers affirmatively Question 6(2) in \cite{GP}. As the third application, we show that if $(\mathcal{M},\tau)$ is a finite factor, $X \in \mathcal{M}$, then there exists a normal operator  $N \in \mathcal{M}$  and a nilpotent operator $K$ such that $X= N+ K$. This result answers affirmatively Question 1.1 in \cite{DNZ}.

\end{abstract}

{\bf Keywords:} Type ${\rm II}_1$ factor, Dixmier's averaging theorem, Commutator

{\bf MSC2010:}  47C15\\

\vskip1.0cm

\section{Introduction}
Let $\M$ be a type ${\rm II_1}$ factor and let $\tau$ be the faithful normal tracial state on $\M$. In~\cite{CFY2}, the following result is proved. If $X=X^*\in \M$, then there is a decomposition of the identity into $N\in \NNN$ mutually orthogonal nonzero projections $E_j$, $I=\sum_{j=1}^NE_j$, for which $E_jXE_j=\tau(X) E_j$ for all $j=1,...,N$, equivalently, there is a unitary operator $U \in \M$ with
$\frac{1}{N}\sum_{j=0}^{N-1}{U^*}^jXU^j=\tau(X)I$. A natural question arises: can we remove the selfadjoint assumption on $X$. In~\cite{WC}, the authors proved that for an arbitrary operator $X$ in the ultrapower algebra of the hyperfinite type ${\rm II}_1$ factor $\R^w$, there is a decomposition of the identity into $N\in \NNN$ mutually orthogonal nonzero projections $E_j\in \R^w$, $I=\sum_{j=1}^NE_j$, for which $E_jXE_j=\tau(X) E_j$ for all $j=1,...,N$. The main result of this paper generalizes the above results to arbitrary operators in any type ${\rm II}_1$ factors. Precisely, we prove the following result.

\begin{Theorem}[Main Theorem]
Let $(\mathcal{M},\tau)$ be a type ${\rm II}_1$ factor, $X_1,\cdots, X_n \in \mathcal{M}$. Then there is a family of finitely many mutually orthogonal non-zero projections $\{E_i \}_{i=1} ^{N}$ in $\mathcal{M}$ such that $\sum\limits_{i=1} ^{N}E_i =I$ and $E_iX_jE_i=\tau (X_j)E_i$ for $1\leq i\leq N$ and $1\leq j\leq n$. That is, $X_j$ can be written as follows
\[  X_j=
  \bordermatrix	
  {&E_1&E_2&\cdots&E_N \cr
  	E_1 &\tau(X_j) & (X_j)_{12} & \cdots & (X_j)_{1N} \cr
  	E_2 &(X_j)_{21}& \tau(X_j) & \cdots & (X_j)_{2N} \cr
  	\vdots & 	\vdots & \vdots & \ddots & \vdots\cr
  	E_N & (X_j)_{N1} & (X_j)_{N2} & \cdots &\tau(X_j) \cr}.
  \]
Equivalently, there is a unitary operator $U\in \M$ such that \[\frac{1}{N} \sum_{i=0}^{N-1}(U^i)^\ast X_j U^i=\tau(X_j)I\] for $1\leq j\leq n$.
\end{Theorem}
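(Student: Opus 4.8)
The plan is to reduce the statement to a simultaneous ``flattening'' problem for self-adjoint, trace-zero operators, to build the projections by a recursive bisection that produces diagonal blocks of trace zero, and then to upgrade these to genuinely zero diagonal blocks. First I would carry out the routine reductions. Replacing each $X_j$ by $X_j-\tau(X_j)I$, we may assume $\tau(X_j)=0$, and the required identity becomes $E_iX_jE_i=0$. Writing $X_j=\mathrm{Re}(X_j)+i\,\mathrm{Im}(X_j)$ and adjoining the real and imaginary parts to the list (which only enlarges $n$), we may further assume every $X_j=X_j^\ast$ with $\tau(X_j)=0$. By the equivalence computed in the statement, it then suffices to produce mutually orthogonal nonzero projections $E_1,\dots,E_N$ with $\sum_iE_i=I$ and $E_iX_jE_i=0$ for all $i,j$; the unitary $U=\sum_i\omega^{i-1}E_i$, with $\omega=e^{2\pi i/N}$ and $\tau(E_i)=1/N$, then realizes the averaging.

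The engine is a \emph{simultaneous bisection lemma}: given self-adjoint $Y_1,\dots,Y_m$ of trace zero in a corner $p\M p$, there is a projection $E\le p$ with $\tau(E)=\tau(p)/2$, $E\sim p-E$, and $\tau(Y_jE)=0$ for every $j$. The conditions $\tau(Y_jE)=0$ are exactly what force both compressions $EY_jE$ and $(p-E)Y_j(p-E)$ to again have trace zero in their respective corners, so the hypotheses reproduce themselves after cutting. I would obtain such an $E$ by a continuity/connectedness argument: the trace-$\tfrac{1}{2}\tau(p)$ projections in $p\M p$ form a path-connected set on which $E\mapsto(\tau(Y_1E),\dots,\tau(Y_mE))\in\RRR^m$ is continuous, the symmetry $E\leftrightarrow p-E$ forces this map to take values symmetric about the origin, and an intermediate-value/degree argument, together with the abundance of projections of prescribed trace in a type ${\rm II}_1$ factor, produces a common zero. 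Iterating the bisection $k$ times yields, for every $k$, a decomposition of $I$ into $2^k$ mutually equivalent projections of trace $2^{-k}$ all of whose diagonal blocks $E_aY_jE_a$ have trace zero.

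The main obstacle is the passage from \emph{trace-zero} diagonal blocks to \emph{zero} diagonal blocks with a \emph{finite} decomposition. The block-diagonal operators $\sum_aE_aY_jE_a=\mathbb{E}_{\mathcal B_k}(Y_j)$, the conditional expectations onto the block-diagonal subalgebras $\mathcal B_k$, form a reverse martingale as $k$ grows, but the trace-zero-block condition controls only their scalar part relative to the abelian subalgebra generated by the $E_a$, not the block operators themselves; hence this route yields at best an approximate, not an exact finite, decomposition. To close the gap I would avoid passing to a limit: inside each corner $E_a\M E_a$ I would apply the exact single-operator result of \cite{CFY2} (together with the arbitrary-operator strengthening underlying \cite{WC}) to the trace-zero blocks, and then splice the resulting corner decompositions through the partial isometries realizing $E_a\sim E_b$ and a cyclic shift, so that the residual off-diagonal coupling is cancelled exactly rather than approximately.

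Verifying that this splicing can be performed simultaneously for all the $Y_j$ while keeping every diagonal block exactly zero — equivalently, solving the finitely many operator equations $E_iX_jE_i=\tau(X_j)E_i$ on the nose rather than to within $\varepsilon$ — is the crux of the argument. This is precisely the point where the diffuseness of $\tau$ must be used essentially: it furnishes projections of any prescribed trace and allows the relevant trace (and block) equations to be solved exactly inside the corners, which is the feature that distinguishes the statement in $\M$ itself from its approximate or ultrapower versions. I expect the bulk of the technical work, and the genuine difficulty, to lie in this final exact-solvability step.
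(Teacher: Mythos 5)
Your reductions (subtracting the trace, adjoining real and imaginary parts) coincide with the paper's, and your simultaneous bisection lemma is in fact provable: working inside a diffuse masa $\A$ one has $\tau(Y_jE)=\tau(E_{\A}(Y_j)E)$ for $E\in\A$, and Lyapunov's convexity theorem applied to the nonatomic vector measure $E\mapsto(\tau(E),\tau(E_{\A}(Y_1)E),\dots,\tau(E_{\A}(Y_m)E))$ yields the desired trace-$\tfrac12\tau(p)$ projection, so your degree-theoretic sketch can be made rigorous. The proposal is nevertheless not a proof. The step you yourself flag as the crux --- upgrading simultaneously \emph{trace-zero} diagonal blocks to simultaneously \emph{zero} diagonal blocks --- is the entire content of the theorem, and ``splice the corner decompositions through the partial isometries and a cyclic shift'' is a description of a hoped-for outcome, not an argument. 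The obstruction is concrete: applying the single-operator theorem of \cite{CFY2} to $Y_1$ inside a corner $E_a\M E_a$ produces subprojections over which you have no control as far as $Y_2$ is concerned, and refining again for $Y_2$ destroys what was arranged for $Y_1$; no iteration of bisections with trace constraints escapes this circularity, because a trace-zero block can be arbitrarily far in norm from a zero block.

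The paper's proof supplies exactly the missing device, and it is not a limiting or approximation argument. For two self-adjoint trace-zero operators $H,K$: first flatten $H$ completely ($E_iHE_i=0$) by \cite{CFY2}; then refine within each corner so that the diagonal blocks of $K$ become scalars (harmless for $H$, since every subprojection of $E_i$ still compresses $H$ to zero); after pairing blocks so that each pair carries $K$-trace zero, one reaches a $2\times2$ picture $H=\bigl(\begin{smallmatrix}0&A\\A^*&0\end{smallmatrix}\bigr)$, $K=\bigl(\begin{smallmatrix}\alpha&*\\ *&\beta\end{smallmatrix}\bigr)$ relative to $P,Q$. The key step uses the polar decomposition of $A=PHQ$ to split $P=P_1+P_2$ and $Q=Q_1+Q_2$ with half the traces and $P_1AQ_2=P_2AQ_1=0$; regrouping into the corners $P_1+Q_2$ and $P_2+Q_1$ makes $H$ vanish \emph{identically} on each corner while the compression of $K$ to each corner still has trace zero. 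One then applies the single-operator theorem to $K$ on those corners with impunity, because no refinement can disturb an operator that is already zero there. This ``kill one operator identically on a corner before flattening the next'' trick is what replaces your unspecified splicing, and its absence is the genuine gap in your proposal.
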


The main theorem has several interesting applications.\\

Application on the commutators in type ${\rm II}_1$ factors.

An operator $A$ in a von Neumann algebra $\M$ is a commutator in $\M$ if there exist operators $B$ and $C$ in $\M$ such that $A=BC-CB$.
A self-commutator is a self-adjoint operator in $\M$ which is of the form $X^*X-XX^*$ for some operator $X \in \M$.
The problem of specifying which operators in $\M$ are commutators in $\M$ has been attacked by several authors and has been solved in certain special cases. K. Shoda in \cite{Shoda} showed that if $\M$ is a type ${\rm I}_n$ factor, then a matrix has trace zero if and only if it is a commutator in $\M$. A. Brown and C. Pearcy showed that if $\M$ is a factor of type ${\rm I}_{\infty}$, then non-commutators in $\M$ are exactly the operators that are congruent to a non-zero scalar modulo the ideal of compact operators \cite{BP2}. A. Brown and C. Pearcy in \cite{BP1} showed that if $\M$ is a factor of type ${\rm III}$ acting on a separable Hilbert space, then the commutators in $\M$ consist exactly of the non-scalar operators together with the operator zero. In \cite{BP}, A. Brown, C. Pearcy and D. Topping showed that every operator in the strong radical of $\M$ is a commutator in an arbitrary properly infinite von Neumann algebra $\M$.

However, The theory is far from complete and one of the most intractable of unsolved problems is that of determining the commutators in a finite von Neumann algebra. A commutator in a finite von Neumann algebra must have central trace zero, and is not unreasonable to hope that the commutators in such an algebra are exactly the operators with central trace zero.

The case of type ${\rm II}_1$ factors remains open. Some partial results are known. Fack and de la Harpe in \cite{FP} showed that every element of trace zero is a sum of ten commutators (and with control of the norms of the elements). S. Goldstein and A. Paszkiewicz improved the number(see \cite{GP}). They proved that any self-adjoint operator of central trace zero in a von Neumann algebra of type ${\rm II}_1$ is a sum of four commutators. The best previous estimates belonged to Marcoux in \cite{Marcoux} which showed that every element of trace zero in a type ${\rm II}_1$ factor is a sum of two commutators, and every self-adjoint element of trace zero is a sum of four or fewer self-commutators. Pearcy and Topping, in \cite{PT}, showed that in the type ${\rm II}_1$ factors of Wright every self-adjoint element of trace zero is a single commutator, and hence, every trace zero element is a sum of two commutators. Recently, K. Dykema and A. Skripka showed that all nilpotent elements are single commutators and every normal element with trace zero and purely atomic distribution is a single commutator(see \cite{DS}).

Using our main theorem, we show that all elements of trace zero in a ${\rm II}_1$ factor are single commutators and any self-adjoint elements of trace zero are single self-commutators. This result solves the problem of commutators in type ${\rm II}_1$ factors~\cite{DS}.\\

Application on the linear combination of projections in type ${\rm II}_1$ factors.

The linear combination of projections in von Neumann algebra factors is an active topic. For example, any self-adjoint operator acting in a finite dimensional Hilbert space can be written as a linear combination of a finite number of projections. A natural question is whether the number of projections does not depend on the dimension of Hilbert space. If the Hilbert space is infinite, is that possible? If the elements of algebra can be written as linear combinations of projections, a question naturally arises: what is the minimum number of projections?

The first positive results were obtained by Fillmore (see \cite{Fillmore}). He proved that any bounded operator acting on a separable infinite-dimensional Hilbert space can be written as a linear combination of 257 projections. Further, Fillmore was able to get down the number of projections to 9 in \cite{Fillmore69}. Pearcy and Topping in \cite{PT67} reduced it to 8, then A. Paszkiewicz to 6 (see \cite{Paszk80}). Using the techniques in \cite{Paszk80} , Matsumoto proved in 1984 that 5 is enough. Also in 1984, Nakamura \cite{Nakamura} showed that any self-adjoint operator can be written as a linear combination of 4 projections.

In \cite{GP},\cite{GP20} and \cite{GP22}, S. Goldstein and A. Paszkiewicz showed that each self-adjoint operator in type ${\rm II}_1$ factors is a linear combination of 12 projections and there exists a self-adjoint operator in any type ${\rm II}_1$ algebra that can not be written as a linear combination of 3 projections; Also for type ${\rm II}_{\infty}$ factors, any self-adjoint operator can be written as a linear combination of 4 projections and there exists a self-adjoint operator that can not be written as a linear combination of 3 projections; For countable decomposable type ${\rm III}$ factors, each self-adjoint element is a linear combination of 3 projections.

As the second application, we prove that any self-adjoint element in type ${\rm II}_1$ factor can be written a linear combination of 4 projections. This result answers affirmatively Question 6(2) in \cite{GP}.\\

Application on the generalization of Schur Theorem.

We state the famous theorem of Schur. For every matrix $T \in M_n(\CCC)$, there exists a unitary matrix $U\in M_n(\CCC)$ such that $U^{-1}TU$ is an upper triangular matrix.
The diagonal entries of $U^{-1}TU$ are the eigenvalues of $T$, repeated up to multiplicity, and $U$ can be chosen so that they appear in any order. Hence each ordering of the
spectrum of $T$ gives a decomposition $T=N+Q$, where $N$ is normal and $Q$ is nilpotent.

Dykema, Sukochev and Zanin in \cite{DSZ} used a Peano curve covering the support of the Brown measure of an operator $T$ in a diffuse, finite von Neumann algebra to give an ordering to the support of the Brown measure, and create a decomposition $T=N+Q$, where $N$ is normal and $Q$ is s.o.t.-quasinilpotent. This constructions generalize the Schur upper triangular form of an $n\times n$ matrix. The normal part $N$ is constructed as the conditional expectation of $T$ onto an abelian algebra generated by an increasing net of Haagerup-Schultz projections of $T$. It is natural to ask the question: under what circumstances is the s.o.t.-quasinilpotent operator $Q$ actually quasinilpotent.

As the third application, we show that if $(\mathcal{M},\tau)$ be a finite factor, $X \in \mathcal{M}$, then there exists a normal operator  $N \in \mathcal{M}$  and a nilpotent operator $K$ such that $X= N+ K$. This result answers affirmatively Question 1.1 in \cite{DNZ}.

The organization of this paper is as follows. Section 2 below is devoted to some results which will be very useful in section 3 and section 4. In Section 3, we prove the main theorem. In Section 4, we give three applications of the main theorem.

\section{Preliminaries}
\begin{Theorem}\label{thm: adjoint operator}{\rm (Theorem 1.1 in \cite{CFY2})}
Let $(\M,\tau)$ be a type ${\rm II}_1$ factor,  $X\in \M$, $X=X^*$. Then we have the following equivalent results.
\begin{enumerate}
\item There is a decomposition of the identity into $N\in \NNN$ mutually orthogonal nonzero projections $E_j$, $I=\sum_{j=1}^NE_j$, for which $E_jXE_j=\tau(X) E_j$ for all $j=1,...,N$;
\item There is a finite dimensional abelian von Neumann algebra $\B \subseteq \M$ such that
      \[E_{\B^\prime \cap \M}(X)=\tau(X)I; \]
where $E_{\B^\prime \cap \M}$ is the conditional expectation from $\M$ onto $\B^\prime \cap \M$;
\item There is a unitary operator $U \in \M$ such that
\[\frac{1}{N}\sum_{j=0}^{N-1}{U^*}^jXU^j=\tau(X)I;\]
\item There is a unitary operator $W \in \M$ with $W^N=I$ and
\[\frac{1}{N}\sum_{j=0}^{N-1}{W^*}^jXW^j=\tau(X)I.\]
\end{enumerate}
\end{Theorem}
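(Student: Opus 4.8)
The plan is to treat the four conditions as four formulations of one assertion: I would first establish that condition (1) holds and then link the remaining forms to it by explicit constructions, so that all four become true statements and are thereby pairwise equivalent. The implications among (1)--(4) are elementary manipulations inside the factor. Given a partition $I=\sum_{j=1}^N E_j$ as in (1), let $\B$ be the finite-dimensional abelian algebra generated by the $E_j$; then $\B'\cap\M=\bigoplus_j E_j\M E_j$ and its trace-preserving conditional expectation is $Y\mapsto\sum_j E_jYE_j$, so (1) gives $E_{\B'\cap\M}(X)=\sum_j\tau(X)E_j=\tau(X)I$, which is (2), and reading the same computation backwards (taking the $E_j$ to be the minimal projections of $\B$) yields (2)$\Rightarrow$(1). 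For (1)$\Rightarrow$(4) I would set $W=\sum_{j=1}^N\omega^jE_j$ with $\omega=e^{2\pi i/N}$; then $W$ is a unitary with $W^N=I$, and the Vandermonde identity $\frac1N\sum_{k=0}^{N-1}\omega^{(l-j)k}=\delta_{jl}$ shows $\frac1N\sum_{k=0}^{N-1}(W^k)^\ast XW^k=\sum_j E_jXE_j=\tau(X)I$. Here (4)$\Rightarrow$(3) is trivial, and (4)$\Rightarrow$(1)/(2) holds because, when $W^N=I$, the averaging map $Y\mapsto\frac1N\sum_k(W^k)^\ast YW^k$ is exactly the conditional expectation onto the fixed-point algebra $\{W\}'\cap\M$, which equals $\B'\cap\M$ for the finite-dimensional abelian $\B=\{W\}''$.

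The real substance is the existence claimed in (1). Writing $Y=X-\tau(X)I$, one has $\tau(Y)=0$, and the goal becomes to produce finitely many mutually orthogonal nonzero projections $E_1,\dots,E_N$ with $\sum_j E_j=I$ and $E_jYE_j=0$ for every $j$; that is, $Y$ should have identically zero diagonal with respect to the partition. This is the continuous-dimension analogue of the classical Fan/Schur--Horn fact that a trace-zero Hermitian matrix is unitarily equivalent to one with zero diagonal.

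My plan for existence has two stages. First I would settle the case where $Y$ has finite spectrum with rational trace weights: one can then choose a system of matrix units $\{e_{jk}\}_{j,k=1}^N$ in $\M$ with $\sum_j e_{jj}=I$, $\tau(e_{jj})=1/N$, so that $Y$ lies in the diagonal copy of $\mathbb{C}^N$ spanned by the $e_{jj}$; applying the finite-dimensional zero-diagonal theorem to the trace-zero coefficient matrix produces a unitary $V\in M_N(\mathbb{C})\subseteq\M$ for which $V^\ast YV$ has vanishing diagonal, whence $E_j=Ve_{jj}V^\ast$ works because $E_jYE_j=Ve_{jj}(V^\ast YV)e_{jj}V^\ast=0$. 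Second, I would pass to an arbitrary self-adjoint $Y$ with $\tau(Y)=0$ by approximating its spectral distribution by finitely supported rational ones and upgrading the resulting approximate partitions to an exact one. The main obstacle is precisely this upgrade: a partition killing the diagonal of a norm-nearby $Y'$ only makes the diagonal of $Y$ small, and naive iteration refines the partition without terminating (the block traces form a non-terminating geometric series), so it does not by itself give a finite $N$ with exact equalities. I expect the key to be a controlled limiting or fixed-point argument that keeps the number of blocks fixed---choosing the approximations coherently so that the unitaries $V$ converge and the limiting partition is exact---or a direct construction of the matrix units adapted to the possibly diffuse spectral decomposition of $Y$, so that each corner carries a unitarily equivalent copy of the same compression and the diffuse case reduces to the finite cyclic ($\mathbb{Z}/N$) situation. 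This exactness-with-finitely-many-blocks step is where the genuine difficulty of the theorem lies.
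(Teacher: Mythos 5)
First, a point of context: the paper you are working from contains no proof of this theorem at all --- it is quoted verbatim as Theorem 1.1 of \cite{CFY2} and used as a black box, so there is no internal argument to compare against. Judged on its own merits, the soft part of your proposal is fine: reading the theorem as asserting that all four conditions hold for every self-adjoint $X$, your web of implications is correct --- $(1)\Leftrightarrow(2)$ via $\B'\cap\M=\bigoplus_j E_j\M E_j$ with conditional expectation $Y\mapsto\sum_j E_jYE_j$, $(1)\Rightarrow(4)$ via $W=\sum_j\omega^jE_j$ and the Vandermonde/roots-of-unity identity, $(4)\Rightarrow(3)$ trivially, and $(4)\Rightarrow(2)$ because averaging over the cyclic group generated by $W$ is the conditional expectation onto $\{W\}'\cap\M$. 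These are exactly the routine reductions; nobody would dispute them.

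The genuine gap is that you have not proved the one statement carrying all the weight, namely the existence claim in $(1)$ for an arbitrary self-adjoint $Y$ with $\tau(Y)=0$, and you say so yourself. Your first stage only covers $Y$ with finite spectrum \emph{and rational} trace weights (note that even finite spectrum with irrational weights already escapes your matrix-units argument, since the spectral projections then cannot be partitioned into equal-trace pieces compatible with a copy of $M_N(\CCC)$), and your second stage is a description of what a proof would need rather than a proof: as you correctly observe, killing the diagonal of a norm-nearby $Y'$ only makes the diagonal of $Y$ small, and iterating refines the partition indefinitely instead of terminating at a finite $N$ with exact equalities $E_jYE_j=0$. No mechanism is offered to make the approximating unitaries converge or to close the fixed-point argument you gesture at. This is precisely the hard content of Theorem 1.1 of \cite{CFY2}; there it is obtained through the connection with finite sums of projections (compare Lemma 2.2 above, from \cite{C-W} and \cite{CFY2}): after an affine rescaling, $E_jXE_j=\tau(X)E_j$ for a finite partition is equivalent to writing a positive operator of integer trace as a finite sum of projections, and the existence of such decompositions is a nontrivial theorem in its own right, not something recoverable from the finite-dimensional zero-diagonal theorem by naive approximation. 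So the proposal is best described as a correct reduction of the theorem to its known hard kernel, with that kernel left open.
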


\begin{Lemma}\label{lem:TFAE  A=N proj sums}{\rm (Proposition 1.5 in \cite{C-W} or Lemma 3.2 in \cite{CFY2})}
Let $(\M,\tau)$ be a type ${\rm II}_1$ factor, $A\in \M^+$, $\tau(A)=1$ and $N\in \NNN$. Then the following conditions are equivalent.
\begin{enumerate}
\item $A$ is the sum of $N$ nonzero projections;
\item There is a unitary operator $U \in \M$ with
\[\frac{1}{N}\sum_{j=0}^{N-1}{U^*}^jAU^j=I.\]
\end{enumerate}
\end{Lemma}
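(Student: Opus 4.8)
The plan is to route the equivalence through the intermediate condition

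(P): there is a partition of unity $I=\sum_{m=0}^{N-1}E_m$ into $N$ nonzero mutually orthogonal projections of $\M$ with $E_mAE_m=E_m$ for every $m$,

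and to prove $(2)\Leftrightarrow(\mathrm P)$ and $(\mathrm P)\Leftrightarrow(1)$ separately. The first equivalence is a short Fourier computation. Given (P), put $\omega=e^{2\pi i/N}$ and $U=\sum_{m}\omega^mE_m$; then $U^N=I$, one has $U^{\ast j}E_nAE_pU^{j}=\omega^{j(p-n)}E_nAE_p$, and the average annihilates every off--diagonal block, so $\frac1N\sum_{j=0}^{N-1}(U^{j})^\ast A U^{j}=\sum_mE_mAE_m=\sum_mE_m=I$, which is (2). Conversely, starting from a unitary as in (2), the equivalence of items (3) and (4) in Theorem~\ref{thm: adjoint operator} lets me assume $U^N=I$; taking the spectral projections $E_m$ of $U$ and running the same computation gives $\sum_mE_mAE_m=I$, and since the summands lie in pairwise orthogonal corners this forces $E_mAE_m=E_m$, i.e.\ (P). Thus the whole lemma reduces to proving that (P) holds if and only if $A$ is a sum of $N$ nonzero projections.

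I would prove this last equivalence by induction on $N$. First I record that (P) forces $0\le A\le NI$: positivity of $A$ makes each two--block corner $\left(\begin{smallmatrix}E_i & E_iAE_j\\ E_jAE_i & E_j\end{smallmatrix}\right)$ positive, whence $\|E_iAE_j\|\le1$, and a Cauchy--Schwarz estimate over the $N$ blocks gives $\|A\|\le N$. For the base case $N=2$ both directions rest on a symmetry. If (P) holds, the self--adjoint unitary $U=E_1-E_2$ satisfies $UAU^\ast=2I-A$ (write $A=I+D$ with $D=A-I$ off--diagonal relative to $\{E_1,E_2\}$), so $A$ is unitarily equivalent to $2I-A$, its spectrum is symmetric about $1$, and the classical theory of two projections in generic position then identifies such an $A$ with a sum $Q_1+Q_2$, written explicitly through the angle operator attached to $D$. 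Conversely, if $A=Q_1+Q_2$, the generic part of $A$ already has spectrum symmetric about $1$, while the normalization $\tau(A)=1$ forces the atoms at $0$ and at $2$ to have equal trace; the resulting full symmetry furnishes a self--adjoint unitary $J$ with $JAJ=2I-A$, and $E_1=\tfrac{I+J}{2},\ E_2=\tfrac{I-J}{2}$ satisfy $E_mAE_m=E_m$, i.e.\ (P).

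The inductive step, in both directions, is where I expect the real difficulty. To pass from $N$ to $N-1$ I would peel off a single projection $Q_N$ adapted to the corner $E_N$ so that $A-Q_N\ge0$ continues to carry an identity--diagonal block structure relative to a partition into $N-1$ projections, and then invoke the inductive hypothesis for $A-Q_N$; in the opposite direction one removes one of the given projections and reconstructs the finer partition. The obstacle is that the off--diagonal couplings among the $E_m$ do not decouple when a single corner is extracted, so the choice of $Q_N$ must simultaneously preserve positivity of $A-Q_N$ and the reduced block structure; controlling these inter--block terms, for which the balance supplied by the normalization $\tau(A)=1$ is essential, is the technical heart of the argument. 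Finally, because the reduction $(2)\Leftrightarrow(\mathrm P)$ is $N$--preserving and the induction produces exactly $N$ nonzero projections, the count matches the two conditions of the lemma.
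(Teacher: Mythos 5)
The paper offers no proof of this lemma at all: it is imported verbatim from Proposition 1.5 of \cite{C-W} and Lemma 3.2 of \cite{CFY2}, so your attempt can only be judged on internal correctness. Your reduction of condition (2) to the intermediate condition (P) is correct but more roundabout than necessary: since $A=A^*$ and $\tau(A)=1$, condition (2) is literally item (3) of Theorem~\ref{thm: adjoint operator} applied to $X=A$, and item (1) of that theorem \emph{is} condition (P); there is no need to pass through $W^N=I$ and spectral projections (and if you do, note that $W$ may have fewer than $N$ distinct eigenvalues, which is harmless only because any subprojection $F\le E$ of a projection with $EAE=E$ again satisfies $FAF=F$, so one can split corners to reach exactly $N$). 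The Fourier computation is needed only for (P) $\Rightarrow$ (2), where it is fine.

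The genuine gap is that the equivalence (P) $\Leftrightarrow$ (1) --- which you correctly identify as the remaining content --- is not actually proved. For (P) $\Rightarrow$ (1) you propose an induction, but no induction is needed: since $(A^{1/2}E_m)^*(A^{1/2}E_m)=E_mAE_m=E_m$, each $A^{1/2}E_m$ is a partial isometry with initial projection $E_m$, so $Q_m:=A^{1/2}E_mA^{1/2}$ is a projection, nonzero because it is Murray--von Neumann equivalent to $E_m$, and $\sum_{m}Q_m=A^{1/2}\bigl(\sum_m E_m\bigr)A^{1/2}=A$; this one line disposes of (2) $\Rightarrow$ (1). The hard direction is (1) $\Rightarrow$ (P), and here your proposal contains no proof: the inductive step is explicitly left open (``the technical heart''), and as formulated it cannot run, because after peeling off a projection $Q_N$ the operator $A-Q_N$ has trace $1-\tau(Q_N)\neq 1$, so the inductive hypothesis --- stated only under the normalization $\tau(A)=1$ --- does not apply to it, and rescaling does not help since a scalar multiple of a sum of projections is generally not a sum of projections; one would have to reformulate and prove a stronger, non-normalized statement. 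Your $N=2$ base case is plausible, but only because the explicit two-projection structure theory supplies the conjugating symmetry $J$ with $JAJ=2I-A$; mere equality of the distributions of $A$ and $2I-A$ would not by itself yield an exact unitary conjugacy in a type ${\rm II}_1$ factor, and in any case the base case does not propagate. This missing implication is precisely the substantive content of the cited results of Choi--Wu and Cao--Fang--Yao, so the proposal falls short of a proof of the lemma.
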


\section{Main theorem}
\begin{Theorem}\label{T:Main Theorem}
Let $(\mathcal{M},\tau)$ be a type ${\rm II}_1$ factor, $X_1,\cdots, X_n \in \mathcal{M}$. Then there is a family of finitely many mutually orthogonal non-zero projections $\{E_i \}_{i=1} ^{N}$ in $\mathcal{M}$ such that $\sum\limits_{i=1} ^{N}E_i =I$ and $E_iX_jE_i=\tau (X_j)E_i$ for $1\leq i\leq N$ and $1\leq j\leq n$. That is, $X_j$ can be expressed as follows
\[  X_j=
  \bordermatrix	
  {&E_1&E_2&\cdots&E_N \cr
  	E_1 &\tau(X_j) & (X_j)_{12} & \cdots & (X_j)_{1N} \cr
  	E_2 &(X_j)_{21}& \tau(X_j) & \cdots & (X_j)_{2N} \cr
  	\vdots & 	\vdots & \vdots & \ddots & \vdots\cr
  	E_N & (X_j)_{N1} & (X_j)_{N2} & \cdots &\tau(X_j) \cr}.
  \]
Equivalently, there is a unitary operator $U\in \M$ such that \[\frac{1}{N} \sum_{i=0}^{N-1}(U^i)^\ast X_j U^i=\tau(X_j)I\] for $1\leq j\leq n$.
\end{Theorem}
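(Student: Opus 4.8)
The plan is to reduce to a finite family of self-adjoint elements, to isolate one ``exact averaging with prescribed trace balance'' lemma as the engine, and to assemble the conclusion by an induction driven by Theorem~\ref{thm: adjoint operator}. Two preliminary reductions are immediate. Writing $X_j=A_j+\sqrt{-1}\,B_j$ with $A_j=\tfrac12(X_j+X_j^{*})$ and $B_j=\tfrac{1}{2\sqrt{-1}}(X_j-X_j^{*})$ self-adjoint, the requirement $E_iX_jE_i=\tau(X_j)E_i$ is equivalent to the pair $E_iA_jE_i=\tau(A_j)E_i$ and $E_iB_jE_i=\tau(B_j)E_i$; hence it suffices to treat finitely many self-adjoint elements. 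Moreover, once mutually orthogonal nonzero projections $E_1,\dots,E_N$ with $\sum_iE_i=I$ and $E_iX_jE_i=\tau(X_j)E_i$ are produced, the unitary $U=\sum_{i=1}^{N}\zeta^{i}E_i$ with $\zeta=e^{2\pi\sqrt{-1}/N}$ satisfies $U^{N}=I$ and, because $\frac1N\sum_{k=0}^{N-1}\zeta^{k(i'-i)}=\delta_{i,i'}$, one computes $\frac1N\sum_{k=0}^{N-1}(U^{k})^{*}X_jU^{k}=\sum_iE_iX_jE_i=\tau(X_j)I$; thus the projection form and the unitary form are equivalent and only the former is needed. I record the structural fact used throughout: exactness is inherited by refinements, i.e.\ $FX_jF=\tau(X_j)F$ forces $EX_jE=\tau(X_j)E$ for every subprojection $E\le F$, whereas the weaker trace-balance $\tau(X_jF)=\tau(X_j)\tau(F)$ is not so inherited.

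The core is a key lemma: for a self-adjoint $Y$ and finitely many self-adjoint $W_1,\dots,W_r$ there is a finite partition $\{F_s\}$ of $I$ that is exact for $Y$ (meaning $F_sYF_s=\tau(Y)F_s$) and simultaneously trace-balanced for every $W_l$ (meaning $\tau(W_lF_s)=\tau(W_l)\tau(F_s)$). To prove it I would first apply Theorem~\ref{thm: adjoint operator} to $Y$ to obtain an exact partition $\{e_a\}_{a=1}^{N}$ for $Y$. These corners are in general not balanced for the $W_l$, and one cannot balance them by regrouping or by operating inside a single corner without destroying exactness. The decisive point is that balancing can be achieved by a global rotation that preserves exactness: for any unitary $u\in\{Y\}'\cap\M$ one has $ue_au^{*}\,Y\,ue_au^{*}=u\,e_aYe_a\,u^{*}=\tau(Y)\,ue_au^{*}$, so the rotated corners $f_a=ue_au^{*}$ remain exact for $Y$, while $\tau(W_lf_a)=\tau(u^{*}W_lu\,e_a)$ can be moved. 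It therefore suffices to choose $u$ in the finite von Neumann algebra $\{Y\}'\cap\M$ so that $\tau(W_lf_a)=\tau(W_l)\tau(f_a)$ for all $a,l$. I would produce such a $u$ by a Borsuk--Ulam / degree argument, parametrising a sphere of rotations inside $\{Y\}'\cap\M$ by a family of traceless anticommuting symmetries and observing that the resulting balance-defect map is odd, so that its finitely many (trace-summing, hence dependent) coordinates must vanish simultaneously. Verifying the feasibility of this rotation-balancing --- matching the dimension of the sphere to the number of independent balance constraints and controlling the structure of $\{Y\}'\cap\M$ when $Y$ has diffuse spectrum --- is the step I expect to be the main obstacle.

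Granting the key lemma, the theorem follows by induction on the number $p$ of elements to be made exact, allowing an arbitrary number $r$ of balance elements at each stage. The case $p=0$ (balance only) is handled by iterating the simplest instance of the construction above, a single balanced halving; the case $p=1$ is the key lemma itself. For the inductive step I would first use the $p=1$ case to make $Y_1$ exact while keeping $Y_2,\dots,Y_p,W_1,\dots,W_r$ all trace-balanced across the resulting corners, and then, inside each corner $F_s$ --- where $Y_1$ already acts as the scalar $\tau(Y_1)$, so that by the inheritance remark any further refinement keeps $Y_1$ exact --- I would apply the inductive hypothesis in the type ${\rm II}_1$ factor $F_s\M F_s$ to the exact targets $Y_2,\dots,Y_p$ and balance targets $W_1,\dots,W_r$. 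Since each $F_s$ is balanced, the trace of each $Y_t$ (and of each $W_l$) computed in the corner equals its global trace, so Theorem~\ref{thm: adjoint operator} applied inside the corner produces precisely the scalars $\tau(Y_t)$, and corner-wise balance of the $W_l$ coincides with global balance. Taking $p=m$ and $r=0$, after the self-adjoint reduction, yields the decomposition claimed in the theorem, and the unitary reformulation then follows from the equivalence recorded in the first step.
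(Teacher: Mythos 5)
Your reductions (to self-adjoint elements, and the equivalence between the projection decomposition and the unitary averaging via $U=\sum_i\zeta^iE_i$ with $\zeta=e^{2\pi i/N}$) are fine, and your inductive scheme would indeed close if your key lemma were available: granting a partition that is exact for $Y_1$ and trace-balanced for $Y_2,\dots,Y_p,W_1,\dots,W_r$, the corner-wise recursion is correct, since balance of $F_s$ makes the corner trace of each remaining element equal to its global trace. The problem is that the key lemma is exactly where the whole difficulty of the theorem lives, and your proposed proof of it is not an argument but a program with a concrete obstruction. You want to rotate an exact partition $\{e_a\}$ for $Y$ by a unitary $u\in\{Y\}'\cap\M$ chosen by a Borsuk--Ulam argument over ``a sphere of traceless anticommuting symmetries'' in $\{Y\}'\cap\M$. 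But if $Y$ generates a maximal abelian subalgebra of $\M$ (a generic situation you cannot avoid, since $Y$ is given), then $\{Y\}'\cap\M$ is that abelian algebra and contains no pair of anticommuting symmetries, so the sphere you need does not exist. Even when $\{Y\}'\cap\M$ is nonabelian, you give no construction of an odd defect map on a sphere whose dimension matches the $N\cdot r$ (minus dependencies) balance constraints, and no reason why a single rotation commuting with $Y$ should balance arbitrarily many $W_l$ against a fixed $\{e_a\}$. Since you yourself flag this as the main obstacle, the proposal does not yet prove the theorem.

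For comparison, the paper avoids any ``balance'' condition altogether. It refines twice (Theorem~\ref{thm: adjoint operator} applied to $H$, then to the diagonal corners of $K$) to reach $H$ with zero diagonal and $K$ with locally scalar diagonal entries $\alpha_i$; the mismatched scalars are then repaired combinatorially, by cutting one corner into pieces and pairing corners so that each pair has weighted trace zero, which reduces everything to the $2\times 2$ block picture $H=\left(\begin{smallmatrix}0&A\\A^*&0\end{smallmatrix}\right)$, $K=\left(\begin{smallmatrix}\alpha&*\\ *&\beta\end{smallmatrix}\right)$ with $\alpha\tau(P)+\beta\tau(Q)=0$. The decisive step is a polar-decomposition lemma producing halvings $P=P_1+P_2$, $Q=Q_1+Q_2$ with $P_1AQ_2=P_2AQ_1=0$; regrouping into $\{P_1,Q_2\}$ and $\{P_2,Q_1\}$ decouples $H$ into two corners on which it vanishes identically, and a final application of Theorem~\ref{thm: adjoint operator} to the trace-zero corners of $K$ there finishes, because any refinement of a corner where $H$ already vanishes keeps $H$ exact. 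The paper's free parameter is thus a geometric splitting adapted to the off-diagonal part of $H$, rather than a rotation commuting with $H$; if you want to salvage your route, you would need an entirely different mechanism for the simultaneous balance step.
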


To prove Theorem~\ref{T:Main Theorem}, we may assume that $X_i=X_i^*$ and $\tau(X_i)=0$ for $1\leq i\leq n$. In the following we assume that $n=2$. The general case can be proved by mathematical induction. We assume that $H=H^*$, $K=K^*$ in $\M$ such that $\tau(H)=\tau(K)=0$. By Theorem \ref{thm: adjoint operator}, there is a family of mutually orthogonal nonzero projections $\{E_i \}_{i=1} ^{N}$ in $\mathcal{M}$ such that $\sum\limits_{i=1} ^{N}E_i =I$ and $E_iHE_i=0$ for $1\leq i\leq N$. So we have
\[
 H=
  \bordermatrix	
  {&E_1&E_2&\cdots&E_N \cr
  	E_1 &0 & \ast & \cdots & \ast \cr
  	E_2 &\ast& 0 & \cdots & \ast \cr
  	\vdots & 	\vdots & \vdots & \ddots & \vdots\cr
  	E_N & \ast & \ast & \cdots & 0 \cr},\quad   K=
  \bordermatrix	
  {&E_1&E_2&\cdots&E_N \cr
  	E_1 &K_1 & \ast & \cdots & \ast \cr
  	E_2 &\ast& K_2 & \cdots & \ast \cr
  	\vdots & 	\vdots & \vdots & \ddots & \vdots\cr
  	E_N & \ast & \ast & \cdots & K_N \cr}.
  \]

By~Theorem \ref{thm: adjoint operator} again, there is family of mutually orthogonal nonzero projections $\{F_i \}_{i=1} ^{L}$ in $\mathcal{M}$ such that $\sum\limits_{i=1} ^{L}F_i =I$ and $F_iHF_i=0$, $F_iKF_i=\alpha_iF_i$ for $1\leq i\leq L$. Now we have
\[
 H=
  \bordermatrix	
  {&F_1&F_2&\cdots&F_L \cr
  	F_1 &0 & \ast & \cdots & \ast \cr
  	F_2 &\ast& 0 & \cdots & \ast \cr
  	\vdots & 	\vdots & \vdots & \ddots & \vdots\cr
  	F_L & \ast & \ast & \cdots & 0 \cr},\quad   K=
  \bordermatrix	
  {&F_1&F_2&\cdots&F_L \cr
  	F_1 &\alpha_1 & \ast & \cdots & \ast \cr
  	F_2 &\ast& \alpha_2 & \cdots & \ast \cr
  	\vdots & 	\vdots & \vdots & \ddots & \vdots\cr
  	F_L & \ast & \ast & \cdots & \alpha_L \cr}.
  \]

By mathematical induction, we only need to show that for
\[
 H=
  \bordermatrix	
  {&F_1&F_2&\cdots&F_L \cr
  	F_1 &0 & \ast & \cdots & \ast \cr
  	F_2 &\ast& 0 & \cdots & \ast \cr
  	\vdots & 	\vdots & \vdots & \ddots & \vdots\cr
  	F_L & \ast & \ast & \cdots & 0 \cr},\quad   K=
  \bordermatrix	
  {&F_1&F_2&\cdots&F_L \cr
  	F_1 &\alpha & \ast & \cdots & \ast \cr
  	F_2 &\ast& \alpha & \cdots & \ast \cr
  	\vdots & 	\vdots & \vdots & \ddots & \vdots\cr
  	F_L & \ast & \ast & \cdots & \beta \cr}
  \]
we can rewrite $H$ and $K$ as
\[
 H=
  \bordermatrix	
  {&P_1&P_2&\cdots&P_s \cr
  	P_1 &0 & \ast & \cdots & \ast \cr
  	P_2 &\ast& 0 & \cdots & \ast \cr
  	\vdots & 	\vdots & \vdots & \ddots & \vdots\cr
  	P_s & \ast & \ast & \cdots & 0 \cr},\quad   K=
  \bordermatrix	
  {&P_1&P_2&\cdots&P_s \cr
  	P_1 &\tau(K) & \ast & \cdots & \ast \cr
  	P_2 &\ast& \tau(K) & \cdots & \ast \cr
  	\vdots & 	\vdots & \vdots & \ddots & \vdots\cr
  	P_s & \ast & \ast & \cdots & \tau(K) \cr}
  \]
for a finite decomposition $I=P_1+\cdots+P_s$. We may assume that $\tau(K)=0$, i.e., $\alpha(\tau(F_1)+\cdots+\tau(F_{L-1}))+\beta\tau(F_L)=0$. By cutting $F_L$ into suitable small pieces, i.e., $F_L=F_{L,1}+\cdots+F_{L,L-1}$, we may assume that $\alpha\tau(F_i)+\beta\tau(F_{L,i})=0$ for $1\leq i\leq L-1$. Therefore, we only need to consider the case:
\[
H=\begin{pmatrix}
0&\quad \ast\\
\ast&\quad 0
\end{pmatrix}, \quad K=\begin{pmatrix}
\alpha &\quad \ast\\
\ast&\quad \beta
\end{pmatrix},
\]
where $\tau(K)=0$.

We consider the special case $\alpha=1$ and $\beta=-1$ first. Then we can write
\[
K=
\begin{pmatrix}
1&\quad C   \\
C^*&\quad -1
\end{pmatrix},\quad
H=
\begin{pmatrix}
0& \quad D   \\
\quad D^*& 0
\end{pmatrix}.
\]
By the polar decomposition theorem, $D=U|D|$, where $U$ is a unitary operator. Thus
\[
\begin{pmatrix}
0& D   \\
D^*& 0
\end{pmatrix}
=\begin{pmatrix}
U& 0   \\
0& I
\end{pmatrix}
\begin{pmatrix}
0& |D|   \\
|D|& 0
\end{pmatrix}
\begin{pmatrix}
U^*& 0   \\
0& I
\end{pmatrix}
\]
\[
\begin{pmatrix}
U^*& 0   \\
0& I
\end{pmatrix}
\begin{pmatrix}
1& C   \\
C^*& -1
\end{pmatrix}
\begin{pmatrix}
U& 0   \\
0& I
\end{pmatrix}
=
\begin{pmatrix}
1& U^*C   \\
C^*U& -1
\end{pmatrix}.
\]
So we may assume that $D\geq 0$. Then there is a maximal abelian von Neumann algebra $\A$ of $\M$ containing $D$. Choose projections $P,Q \in \A$ with $P+Q=I$ and $\tau(P)=\tau(Q)=\frac{1}{2}$. Therefore, we can write
\[
H=
\begin{pmatrix}
0 & 0&  PD&0\\
0 & 0&  0&QD\\
PD & 0&  0&0\\
0 & QD&  0&0
\end{pmatrix}
\text{ and }
K=
\begin{pmatrix}
P & 0&  *&*\\
0 & Q&  *&*\\
* & *&  -P&0\\
* & *&  0&-Q
\end{pmatrix}.
\]
Note that we can rewrite $H$ and $K$ in the following way (interchange the second row and the fourth row and interchange the second column and fourth column of $H$ and $K$, respectively).
\[
H=
\begin{pmatrix}
0 & 0&  PD&0\\
0 & 0&  0&QD\\
PD & 0&  0&0\\
0 & QD&  0&0
\end{pmatrix}
\text{ and }
K=
\begin{pmatrix}
P & *&  *&0\\
* & -Q&  0&*\\
* & 0&  -P&*\\
0 & *&  *& Q
\end{pmatrix}.
\]
Since $\tau(P)=\tau(Q)$, by Theorem~\ref{thm: adjoint operator}, there are finitely many projections $\tilde{E_i}$ such that
\[
\tilde{E_i}\begin{pmatrix}
P & *& \\
* & -Q&
\end{pmatrix} \tilde{E_i}=0
\]
and finitely many projections $\tilde{F_j}$ such that
\[
\tilde{F_j}\begin{pmatrix}
-P & *& \\
* & Q&
\end{pmatrix} \tilde{F_j}=0
\]
Now the $\{\tilde{E_i},\tilde{F_j}\}$ satisfy $\sum \tilde{E_i}+\sum \tilde{F_j}=I$, $\tilde{E_i}H\tilde{E_i}=\tilde{E_i}K\tilde{E_i}=0$ and $\tilde{F_j}H\tilde{F_j}=\tilde{F_j}K\tilde{F_j}=0$.

Now, we consider the general case:
\[
H=\begin{pmatrix}
0&\quad A\\
A^*&\quad 0
\end{pmatrix}\begin{matrix}
P\\
Q
\end{matrix}
, \quad K=\begin{pmatrix}
\alpha &\quad \ast\\
\ast&\quad \beta
\end{pmatrix}\begin{matrix}
P\\
Q
\end{matrix},
\]
where $\tau(K)=\alpha\tau(P)+\beta\tau(Q)=0$.

\begin{Lemma}
Let $A=PAQ$ as above. Then there exist projections $P_1,P_2$ and $Q_1,Q_2$ such that $P_1+P_2=P$, $\tau(P_1)=\tau(P_2)=\frac{1}{2}\tau(P)$, $Q_1+Q_2=Q$, $\tau(Q_1)=\tau(Q_2)=\frac{1}{2}\tau(Q)$ and $P_1AQ_2=P_2AQ_1=0$.
\end{Lemma}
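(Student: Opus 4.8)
The plan is to use the polar decomposition of $A$ to transport a symmetric halving of the domain projection to a matching halving of the range projection, while splitting the degenerate (kernel/cokernel) parts of $P$ and $Q$ freely. First I would record the structure: since $A=PAQ$, we have $A^*A\in Q\M Q$ and $AA^*\in P\M P$, so $|A|=(A^*A)^{1/2}$ lies in $Q\M Q$. Writing the polar decomposition $A=V|A|$ with $V$ a partial isometry in $\M$, we have $V^*V=R_Q$ the support projection of $|A|$ and $VV^*=R_P$ the range projection of $A$, with $R_Q\le Q$ and $R_P\le P$. The two key features are that $A=R_P A R_Q$ (so $A$ annihilates $Q-R_Q$ and is annihilated by $P-R_P$), and that $\tau(R_Q)=\tau(R_P)=:r$, since $V$ implements a Murray--von Neumann equivalence $R_Q\sim R_P$.

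Next I would split the ``active'' part symmetrically and the degenerate part freely. The crucial step is to produce a projection $R_{Q,1}\le R_Q$ that commutes with $|A|$ and has $\tau(R_{Q,1})=\tfrac r2$; then set $R_{Q,2}=R_Q-R_{Q,1}$ and transport both across $V$ via $R_{P,i}=VR_{Q,i}V^*$, which are subprojections of $R_P$ with $\tau(R_{P,i})=\tau(R_{Q,i})=\tfrac r2$. On the parts where $A$ vanishes I would split arbitrarily: choose projections $S_1,S_2$ with $S_1+S_2=Q-R_Q$ and $\tau(S_1)=\tau(S_2)=\tfrac12\tau(Q-R_Q)$, and $T_1,T_2$ with $T_1+T_2=P-R_P$ and $\tau(T_1)=\tau(T_2)=\tfrac12\tau(P-R_P)$, which exist because $\M$ is a ${\rm II}_1$ factor. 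Finally set $Q_i=R_{Q,i}+S_i$ and $P_i=R_{P,i}+T_i$.

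The verification is then routine. The trace conditions follow from the symmetric choices: $\tau(Q_1)=\tfrac r2+\tfrac12\tau(Q-R_Q)=\tfrac12\tau(Q)$ and similarly for $Q_2,P_1,P_2$, while $Q_1+Q_2=Q$ and $P_1+P_2=P$ by construction. For the orthogonality relations, because $A=R_PAR_Q$ the degenerate pieces drop out of the products, giving $P_1AQ_2=R_{P,1}AR_{Q,2}$; using $R_{P,1}V=VR_{Q,1}$ together with the fact that $R_{Q,1}$ commutes with $|A|$, this becomes $VR_{Q,1}|A|R_{Q,2}=V|A|R_{Q,1}R_{Q,2}=0$ since $R_{Q,1}R_{Q,2}=0$, and symmetrically $P_2AQ_1=0$.

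The main obstacle, and the only step that is not purely formal, is the existence of a $|A|$-commuting projection $R_{Q,1}$ of exact trace $\tfrac r2$ inside $R_Q\M R_Q$. I would obtain it as follows: the spectral projections $\mathbb 1_{[0,t]}(|A|)$ all commute with $|A|$ and their traces sweep $[0,r]$ continuously except for jumps at the atoms of the distribution of $|A|$; at an atom with spectral projection $e$ one has $|A|e=\lambda e$, so \emph{every} subprojection of $e$ commutes with $|A|$, and since $e\M e$ is again a ${\rm II}_1$ factor it contains subprojections of every intermediate trace. Combining a suitable spectral projection with such an atomic cut yields a $|A|$-commuting projection of trace exactly $\tfrac r2$, which is where the diffuseness of the ${\rm II}_1$ factor is genuinely used, and this completes the argument.
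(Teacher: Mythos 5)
Your proposal is correct and follows essentially the same route as the paper: polar decomposition $A=V|A|$, a trace-halving projection under the support of $|A|$ that commutes with $|A|$, transported across $V$ to the range side, with the kernel/cokernel pieces of $Q$ and $P$ split arbitrarily. The only (minor) difference is in producing the $|A|$-commuting projection of trace $\tfrac r2$: the paper simply takes a maximal abelian subalgebra containing $W^*(|A|,\tilde Q)$, which is diffuse in a type ${\rm II}_1$ factor and hence contains a subprojection of $\tilde Q$ of any prescribed trace, whereas you argue directly with spectral projections and atomic cuts; both work.
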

\begin{proof}
Let $A=V|A|$ be the polar decomposition of $A$. Let $\tilde{Q}=R(A^*)\leq Q$ and let $\tilde{P}=R(A)\leq P$. Then $V:\tilde{Q}\rightarrow \tilde {P}$ is a unitary operator. Note that $\tilde{Q}=R(A^*)=R(A^*A)=R(|A|)$. So we have $\tilde{Q}|A|=|A|=|A|\tilde{Q}$. Let $\A$ be a maximal abelian von Neumann algebra containing $W^*(|A|, \tilde{Q})$. Then there exist two projections $\tilde{Q}_1,\tilde{Q}_2$ in $\A$ such that $\tilde{Q}_1+\tilde{Q}_2=\tilde{Q}$ and $\tau(\tilde{Q}_1)=\tau(\tilde{Q}_2)=\frac{1}{2}\tau(\tilde{Q})$. Note that $\tilde{Q}_1|A|=|A|\tilde{Q}_1$ and $\tilde{Q}_2|A|=|A|\tilde{Q}_2$. Let $\tilde{P}_1=V\tilde{Q}_1V^*$, $\tilde{P}_2=V\tilde{Q}_2V^*$. Then $\tilde{P}_1+\tilde{P}_2=\tilde{P}$ and $\tau(\tilde{P}_1)=\tau(\tilde{P}_2)=\frac{1}{2}\tau(\tilde{P})$. Note that
\[
\tilde{P}_2A\tilde{Q}_1=V\tilde{Q}_2V^*V|A|\tilde{Q}_1=V\tilde{Q}_2|A|\tilde{Q}_1=V\tilde{Q}_2\tilde{Q}_1|A|=0,
\]
\[
\tilde{P}_1A\tilde{Q}_2=V\tilde{Q}_1V^*V|A|\tilde{Q}_2=V\tilde{Q}_1|A|\tilde{Q}_2=V\tilde{Q}_1\tilde{Q}_2|A|=0.
\]
Let $Q_1',Q_2'$ be projections such that $Q_1'+Q_2'=Q-\tilde{Q}$ and $\tau(Q_1')=\tau(Q_2')=\frac{1}{2}\tau(Q-\tilde{Q})$.
Let $P_1',P_2'$ be projections such that $P_1'+P_2'=P-\tilde{P}$ and $\tau(P_1')=\tau(P_2')=\frac{1}{2}\tau(P-\tilde{P})$.
Let $Q_1=Q_1'+\tilde{Q}_1$, $Q_2=Q_2'+\tilde{Q}_2$, $P_1=P_1'+\tilde{P}_1$, $P_2=P_2'+\tilde{P}_2$. Then $P_1,P_2,Q_1,Q_2$ satisfy the lemma.
\end{proof}

Now we come back to the proof of the general case. Write
\[
H=\begin{pmatrix}
0&0&\ast&0\\
0&0&0&\ast\\
\ast&0&0&0\\
0&\ast&0&0
\end{pmatrix}
\begin{matrix}
P_1\\
P_2\\
Q_1\\
Q_2
\end{matrix}=\begin{pmatrix}
0&0&\ast&\ast\\
0&0&\ast&\ast\\
\ast&\ast&0&0\\
\ast&\ast&0&0
\end{pmatrix}
\begin{matrix}
P_1\\
Q_2\\
P_2\\
Q_1\end{matrix},
\]
\[
K=\begin{pmatrix}
\alpha&0&\ast&\ast\\
0&\alpha&\ast&\ast\\
\ast&\ast&\beta&0\\
\ast&\ast&0&\beta
\end{pmatrix}
\begin{matrix}
P_1\\
P_2\\
Q_1\\
Q_2\end{matrix}=\begin{pmatrix}
\alpha&\ast&\ast&\ast\\
\ast&\beta&\ast&\ast\\
\ast&\ast&\alpha&\ast\\
\ast&\ast&\ast&\beta
\end{pmatrix}
\begin{matrix}
P_1\\
Q_2\\
P_2\\
Q_1\end{matrix}.
\]

Since $\alpha\tau(P_1)+\beta\tau(Q_2)=\alpha\tau(P_2)+\beta\tau (Q_1)=0$, by Theorem~\ref{thm: adjoint operator}, there are finitely many projections $E_i$ such that $E_1+\cdots+E_N=I$ and $E_iHE_i=E_iKE_i=0$ for $1\leq i\leq N$.

\section{Applications}
The following theorem answers affirmatively Question 1.1 in~\cite{DS}.
\begin{Theorem}\label{commutator}
If $(\M,\tau)$ is a type ${\rm II}_1$ factor and $A \in \M$ with $\tau(A)=0$, then A is a single commutator. If $A$ is a self-adjoint operator with $\tau(A)=0$, then $A$ is a self-commutator.
\end{Theorem}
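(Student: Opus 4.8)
The plan is to deduce both statements from the Main Theorem (Theorem~\ref{T:Main Theorem}) applied to $A$, which puts $A$ into ``hollow'' form with respect to a finite decomposition of the identity. Concretely, applying Theorem~\ref{T:Main Theorem} to the single element $A$ (equivalently, to its real and imaginary parts with $n=2$ when $A$ is not self-adjoint) and using $\tau(A)=0$, I obtain mutually orthogonal nonzero projections $E_1,\dots,E_N$ with $\sum_{i=1}^N E_i=I$ and $E_iAE_i=\tau(A)E_i=0$ for every $i$. Writing $A_{ij}=E_iAE_j$, this says that all diagonal blocks of $A$ vanish. If $N=1$ then $A=0$ and the statement is trivial, so I may assume $N\geq 2$ and fix $N$ distinct scalars $\lambda_1,\dots,\lambda_N$.

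For the commutator statement, I would set $D=\sum_{i=1}^N\lambda_iE_i\in\M$ and define $X=\sum_{i\neq j}(\lambda_i-\lambda_j)^{-1}A_{ij}\in\M$, a finite sum of bounded blocks. Since $(DX-XD)_{ij}=(\lambda_i-\lambda_j)X_{ij}$ for $i\neq j$ while both sides vanish on the diagonal, a direct block computation gives $DX-XD=A$. Hence $A=DX-XD$ is a single commutator; note that this step uses only $\tau(A)=0$ and not self-adjointness.

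For the self-commutator statement, I would first reduce $X^\ast X-XX^\ast$ to a commutator of self-adjoint elements via the identity $X^\ast X-XX^\ast=2i(HK-KH)$ for $X=H+iK$ with $H=H^\ast$, $K=K^\ast$; thus it suffices to realize $A=2i(HK-KH)$. I would take $K=\sum_{i=1}^N\lambda_iE_i$ with the $\lambda_i$ real and distinct (so that $K=K^\ast$), and solve for $H$ block by block by setting $H_{ij}=\frac{A_{ij}}{2i(\lambda_j-\lambda_i)}$ for $i\neq j$ and $H_{ii}=0$. The diagonal identity $A_{ii}=0=(HK-KH)_{ii}$ is automatic, and $(HK-KH)_{ij}=(\lambda_j-\lambda_i)H_{ij}$ recovers $A_{ij}$ off the diagonal. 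The crucial point is that this $H$ is self-adjoint: using $A_{ij}^\ast=A_{ji}$ (valid since $A=A^\ast$) together with $\lambda_i\in\RRR$, one checks $H_{ij}^\ast=H_{ji}$. Then $X=H+iK\in\M$ satisfies $X^\ast X-XX^\ast=2i(HK-KH)=A$, so $A$ is a self-commutator.

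The substantive input is the Main Theorem, which produces the vanishing diagonal; once that is in hand the constructions above are essentially forced, so I do not expect a serious obstacle beyond bookkeeping. The only point requiring genuine care is the self-adjoint case, where the self-adjointness of $H$ hinges on choosing the weights $\lambda_i$ real and on the Hermitian symmetry $A_{ij}^\ast=A_{ji}$ of the off-diagonal blocks—this is precisely why a real-diagonal $K$, rather than the arbitrary diagonal $D$ used for the commutator case, must be employed.
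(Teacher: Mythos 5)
Your proposal is correct, and the two halves deserve separate comparison. For the commutator statement you do essentially what the paper does: after the Main Theorem forces $E_iAE_i=0$, you take a diagonal element $D=\sum_i\lambda_iE_i$ with distinct eigenvalues and divide each off-diagonal block $A_{ij}$ by $\lambda_i-\lambda_j$; the paper makes the specific choice $\lambda_j=e^{(j-1)2\pi i/N}$ (roots of unity) but the computation is identical, so this half is the same argument. For the self-commutator statement your route is genuinely different. The paper first writes $A=D-CDC^*$ with $C$ the diagonal unitary of roots of unity and $D$ self-adjoint (solving $(1-e^{(k-j)2\pi i/N})D_{kj}=A_{kj}$), then shifts $D$ by a large scalar $\lambda$ to make it positive and sets $X=C(D+\lambda)^{1/2}$, so that $X^*X-XX^*=(D+\lambda)-C(D+\lambda)C^*=A$. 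You instead use the Cartesian-decomposition identity $X^*X-XX^*=2i(HK-KH)$ for $X=H+iK$, take $K=\sum_i\lambda_iE_i$ with real distinct $\lambda_i$, and solve for a self-adjoint $H$ block by block; your verification that $H_{ij}^*=H_{ji}$ from $A_{ij}^*=A_{ji}$ and $\lambda_i\in\RRR$ is the one point needing care and you handle it correctly. Your version avoids the positivity shift and the square root, at the cost of the (easy) Cartesian identity; the paper's version produces $X$ in the slightly more explicit form $C(D+\lambda)^{1/2}$. Both are complete proofs given the Main Theorem.
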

\begin{proof}
By Theorem \ref{T:Main Theorem}, there is a family of finitely many mutually orthogonal non-zero projections $\{E_i \}_{i=1} ^{N}$ in $\mathcal{M}$ such that $\sum\limits_{i=1} ^{N}E_i =I$ and $E_iAE_i=0$ for $1\leq i\leq N$. That is, $A$ can be written as follows
\[  A=
  \bordermatrix	
  {&E_1&E_2&\cdots&E_N \cr
  	E_1 &0 & A_{12} & \cdots & A_{1N} \cr
  	E_2 &A_{21}& 0 & \cdots & A_{2N} \cr
  	\vdots & 	\vdots & \vdots & \ddots & \vdots\cr
  	E_N & A_{N1} & A_{N2} & \cdots &0 \cr}.
  \]
Then $A=BC-CB$, where
\[  B=
\begin{pmatrix}
0 & B_{12} & \cdots & B_{1N}\\
B_{21}& 0 & \cdots & B_{2N} \\
 	\vdots & \vdots & \ddots & \vdots \\
 B_{N1} & B_{N2} & \cdots &0
\end{pmatrix},\quad
C=
\begin{pmatrix}	
1 & 0 & \cdots & 0 \\
0& e^{\frac{2\pi i}{N}} & \cdots & 0 \\
 	\vdots & \vdots & \ddots & \vdots \\
 0 & 0 & \cdots &e^{\frac{(N-1)2\pi i}{N}}
\end{pmatrix}
  \]
and where the $B_{kj}$ are chosen so that $(e^{\frac{(j-1)2\pi i}{N}}-e^{\frac{(k-1)2\pi i}{N}})B_{kj}=A_{kj}, 1\leq k,j \leq N $ and $k \neq j $.

Now, suppose that $A$ is a self-adjoint element with $\tau(A)=0$. By Theorem \ref{T:Main Theorem} again
\[  A=
  \bordermatrix	
  {&E_1&E_2&\cdots&E_N \cr
  	E_1 &0 & A_{12} & \cdots & A_{1N} \cr
  	E_2 &A_{12}^*& 0 & \cdots & A_{2N} \cr
  	\vdots & 	\vdots & \vdots & \ddots & \vdots\cr
  	E_N & A_{1N}^* & A_{2N}^* & \cdots &0 \cr}.
  \]
Then $A=D-CDC^*$, where
\[  D=
\begin{pmatrix}
0 & D_{12} & \cdots & D_{1N}\\
D_{12}^*& 0 & \cdots & D_{2N} \\
 	\vdots & \vdots & \ddots & \vdots \\
 D_{1N}^* & D_{2N}^* & \cdots &0
\end{pmatrix}, \quad C=
\begin{pmatrix}	
1 & 0 & \cdots & 0 \\
0& e^{\frac{2\pi i}{N}} & \cdots & 0 \\
 	\vdots & \vdots & \ddots & \vdots \\
 0 & 0 & \cdots &e^{\frac{(N-1)2\pi i}{N}}
\end{pmatrix}
\]
and the $D_{kj}$ are chosen so that $(1-e^{\frac{(k-j)2\pi i}{N}})D_{kj}=A_{kj}, 1\leq k,j \leq N$ and $k \neq j $. Since $D$ is a self-adjoint operator, then for large enough $\lambda \in \RRR$, $D+\lambda \geqslant 0$. Set
$X:=C(D+\lambda)^{\frac{1}{2}}$, we have $A=X^*X-XX^*$.
\end{proof}

The following lemma is Proposition 2.10 of \cite{KNZ}. We provide a complete proof for reader's convenience.
\begin{Lemma}\label{lemma: two proj sum}
Let $(\M,\tau)$ be a type ${\rm II}_1$ factor and $A^*=A \in \M$ with $\tau(A)=0$. If there is a unitary $U \in \M$ for which $A+U^*AU=0$, then $A$ can be expressed as $A=aP+bQ$ with projections $P,Q\in \M$ and $a,b \in \RRR$.
\end{Lemma}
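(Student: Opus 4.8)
The plan is to rephrase the hypothesis as an anticommutation relation and then reduce to a standard construction of two projections from an anticommuting \emph{symmetry} (a self-adjoint unitary). First, $A+U^*AU=0$ is equivalent to $U^*AU=-A$, i.e. $AU=-UA$. I will produce the decomposition with $b=-a$, so the real goal is to realize $D:=A/\|A\|$ (assuming $A\neq0$; the case $A=0$ is trivial, taking $a=b=0$) as a difference $P-Q$ of two projections. The device I would invoke is the following elementary fact: if $J\in\M$ is a symmetry with $JD=-DJ$, then $J$ commutes with every even Borel function of $D$, in particular with $R:=(I-D^2)^{1/2}$, and a direct computation (using $J^2=I$, $JD=-DJ$, $JR=RJ$) shows that $P:=\tfrac12(I+D+JR)$ and $Q:=\tfrac12(I-D+JR)$ are projections with $P-Q=D$; that $Q$ is a projection follows from the same computation with $D$ replaced by $-D$. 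Granting such a $J$, we obtain $A=\|A\|\,D=\|A\|(P-Q)=\|A\|P+(-\|A\|)Q$, which is the asserted form.

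Everything thus reduces to producing a symmetry $J\in\M$ anticommuting with $A$, and this is the step I expect to be the main obstacle, since the hypothesis supplies only the unitary $U$, which is generally far from self-adjoint. To build $J$ I would exploit the spectral symmetry forced by $U^*AU=-A$. Let $P_+,P_-,P_0$ be the spectral projections of $A$ for $(0,\infty)$, $(-\infty,0)$ and $\{0\}$. Conjugating the functional calculus of $A$ by $U$ and using $U^*AU=-A$ yields $U^*P_+U=P_-$, $U^*P_-U=P_+$ and $U^*P_0U=P_0$; in particular $P_+\sim P_-$. Writing $A_+:=P_+AP_+\ge0$ and $A_-:=-P_-AP_-\ge0$ for the positive and negative parts, the same computation gives $U^*A_+U=A_-$ and $U^*A_-U=A_+$.

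On the support $I-P_0=P_++P_-$ I would set $W:=UP_+$; then $W^*W=P_+$, $WW^*=UP_+U^*=P_-$, and the intertwining relations give $WA_+=UA_+=A_-UP_+=A_-W$. Hence $J_1:=W+W^*$ is a symmetry of $(P_++P_-)\M(P_++P_-)$ (indeed $J_1^2=W^*W+WW^*=P_++P_-$), and writing $A=\mathrm{diag}(A_+,-A_-)$ in $2\times2$ form with respect to $P_+\oplus P_-$, one sees $AJ_1+J_1A=0$ precisely because $WA_+=A_-W$ (and its adjoint). On the kernel, $A$ vanishes, so any symmetry of $P_0\M P_0$ anticommutes with it: since $\M$ is a type ${\rm II}_1$ factor, if $\tau(P_0)>0$ there is a projection $e\le P_0$ with $\tau(e)=\tfrac12\tau(P_0)$, and $J_0:=2e-P_0$ is a symmetry of $P_0\M P_0$ (if $P_0=0$ there is nothing to do). Setting $J:=J_0+J_1$ gives a symmetry of $\M$ with $JA+AJ=0$, which then feeds into the projection formula of the first paragraph.

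The only genuinely delicate point, beyond routine verification, is the construction of $J$: one must confirm that $U$ really carries the range of $P_+$ onto the range of $P_-$ and simultaneously intertwines the two positive parts $A_+$ and $A_-$, so that $W+W^*$ is at once a symmetry and an anticommutant of $A$. The kernel contributes no obstruction, precisely because the type ${\rm II}_1$ hypothesis guarantees a halving projection $e\le P_0$ and hence a symmetry on $P_0\M P_0$.
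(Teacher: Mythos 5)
Your argument is correct, but it takes a genuinely different route from the paper's. The paper's proof is a three-line reduction: normalize to $B:=A/\|A\|$, observe that the hypothesis gives the Dixmier-type averaging identity $\tfrac12\bigl((B+I)+U^*(B+I)U\bigr)=I$ for the positive trace-one element $B+I$, and then invoke the quoted equivalence (Lemma~\ref{lem:TFAE  A=N proj sums}, due to Choi--Wu) with $N=2$ to write $B+I=P+Q$, whence $A=\|A\|P-\|A\|(I-Q)$. You instead bypass that lemma entirely: you upgrade the hypothesis ``$A$ is unitarily conjugate to $-A$'' to ``$A$ anticommutes with a symmetry $J$'' by exploiting the spectral symmetry $U^*P_\pm U=P_\mp$, $U^*A_\pm U=A_\mp$ to build the partial isometry $W=UP_+$ with $W^*W=P_+$, $WW^*=P_-$, $WA_+=A_-W$, and then apply the classical formula $P=\tfrac12(I+D+JR)$, $Q=\tfrac12(I-D+JR)$ with $R=(I-D^2)^{1/2}$. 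I checked the key identities: $UAU^*=-A$ does follow from $U^*AU=-A$, so $UA_+U^*=A_-$ and hence $WA_+=A_-W$; $W^2=0$ so $J_1=W+W^*$ is a symmetry of the corner $(P_++P_-)\M(P_++P_-)$; and the $2\times2$ block computation confirms $J_1A+AJ_1=0$ (on $P_0$ any choice works since $AP_0=0$, so the halving projection is not even needed there). What each approach buys: the paper's is shorter but leans on the nontrivial ``sum of two projections'' characterization as a black box; yours is self-contained and constructive, produces explicit formulas for $P$ and $Q$, and isolates the structural fact that the conjugating unitary can always be replaced by a self-adjoint one. Both yield the same coefficients $a=\|A\|$, $b=-\|A\|$.
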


\begin{proof}
Set $B:=\frac{A}{\|A\|}$, then $\tau(B+I)=1$ and $B+I \geq 0$. Since $A+U^*AU=0$, $\frac{(B+I)+U^*(B+I)U}{2}=I$. By Lemma \ref{lem:TFAE  A=N proj sums}, there exist projections $P,Q \in \M$ such that $B+I=P+Q$. Thus we have
$A=\|A\|P-\|A\|(I-Q)$.
\end{proof}

The idea for the proof of the following theorem comes from \cite{V. Rabanovich}. This result answers affirmatively Question 6(2) in \cite{GP}.

\begin{Theorem}
Let $(\M,\tau)$ be a type ${\rm II}_1$ factor and $A^*=A \in \M$, then $A$ is a linear combination of 4 projections.
\end{Theorem}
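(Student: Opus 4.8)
The plan is to reduce the problem to the situation handled by Lemma~\ref{lemma: two proj sum}, where a self-adjoint element $A$ satisfying $A+U^*AU=0$ for some unitary $U$ can be written as a linear combination of two projections. Since a single such relation gives only $2$ projections, I would expect to decompose $A$ into two pieces, each of which admits its own unitary implementing the anticommutation-type relation, so that together they yield $2+2=4$ projections.

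First I would apply the Main Theorem (Theorem~\ref{T:Main Theorem}) to $A-\tau(A)I$, producing a decomposition $I=\sum_{i=1}^N E_i$ into mutually orthogonal nonzero projections with $E_i(A-\tau(A)I)E_i=0$, i.e.\ $E_iAE_i=\tau(A)E_i$. Equivalently, there is a unitary $U\in\M$ with $U^N=I$ (as in Theorem~\ref{thm: adjoint operator}(4)) for which $\frac{1}{N}\sum_{j=0}^{N-1}(U^j)^*(A-\tau(A)I)U^j=0$. The key idea, following the cited averaging structure, is to split $A-\tau(A)I$ so that part of the averaging sum telescopes into an anticommutation relation. The cleanest route is to arrange $N$ to be even, say $N=2m$, and use the shift unitary $U$ so that the operator $B:=A-\tau(A)I$ is carried by $U$ through its $N$ conjugates summing to zero.

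The main technical step is then to produce two self-adjoint operators $A_1,A_2$, each of trace zero, with $A-\tau(A)I=A_1+A_2$, and unitaries $V_1,V_2$ with $A_k+V_k^*A_kV_k=0$ for $k=1,2$. The natural construction is to group the $2m$ conjugates of $B$ into pairs $\{(U^{2\ell})^*BU^{2\ell},(U^{2\ell+1})^*BU^{2\ell+1}\}$ and define $A_1,A_2$ as complementary partial sums, with $V_1=V_2=U^m$ (or an appropriate power) serving as the involutive unitary that swaps the two halves of the averaging sum; because the full sum vanishes, each half is the negative of its $V_k$-conjugate. I would verify $A_k=V_k^*A_kV_k^*\cdot(-1)$ directly from the vanishing of $\frac{1}{N}\sum_{j=0}^{N-1}(U^j)^*BU^j$ by splitting the index set $\{0,\dots,N-1\}$ into its even and odd residues and observing that $U^m$ interchanges the two residue classes up to sign. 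Applying Lemma~\ref{lemma: two proj sum} to each $A_k$ gives $A_k=a_kP_k+b_kQ_k$, hence $A=\tau(A)I+A_1+A_2$ is a linear combination of the four projections $P_1,Q_1,P_2,Q_2$ together with $I$; absorbing $\tau(A)I$ requires one final observation that a scalar multiple of $I$ is itself a combination of the existing projections or that $I$ counts among four (since $I=P_1+(I-P_1)$ and the constant can be folded into the coefficients), so the total stays at $4$.

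The hard part will be the bookkeeping that keeps the projection count at exactly $4$ rather than $5$: naively one gets $2+2$ projections from the two applications of Lemma~\ref{lemma: two proj sum} plus a leftover $\tau(A)I$ term. The resolution is to choose the splitting of $B=A-\tau(A)I$ so that the scalar $\tau(A)$ is incorporated into one of the two anticommuting pieces (for instance by shifting one $a_k,b_k$ pair), which is possible precisely because Lemma~\ref{lemma: two proj sum} allows arbitrary real coefficients $a_k,b_k$ and an affine adjustment of a projection $P$ changes $aP$ by a scalar multiple of $I$. I would therefore carry out the argument for $A$ directly (not $A-\tau(A)I$), arranging that the constant $\tau(A)$ is absorbed into the freedom of the real coefficients, so that exactly four projections appear.
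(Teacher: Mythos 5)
Your reduction to Lemma~\ref{lemma: two proj sum} is a reasonable instinct, but the central construction does not produce the decomposition you claim. Writing $B=A-\tau(A)I$ and $B_j=(U^j)^*BU^j$, the averaging identity $\frac{1}{N}\sum_{j=0}^{N-1}B_j=0$ says that \emph{all} the conjugates sum to zero. If you then define $A_1$ and $A_2$ as complementary partial sums of the $B_j$ (even versus odd indices, or first half versus second half), you do indeed get $A_k+(U^m)^*A_kU^m=0$ in the half-versus-half case, but you also get $A_1+A_2=\sum_{j=0}^{N-1}B_j=0$, not $A_1+A_2=B$. The operator $B$ is only the single term $B_0$ of this sum, and the vanishing of the full average gives no way to recover $B$ as a sum of two pieces each satisfying an anticommutation relation $A_k+V_k^*A_kV_k=0$. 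So the main technical step --- the splitting $B=A_1+A_2$ --- is not established, and it is exactly the hard part of the theorem. A second, independent gap is the absorption of $\tau(A)I$: Lemma~\ref{lemma: two proj sum} only applies to trace-zero operators and always outputs an operator of the form $\|A\|(P+Q-I)$, whose trace is forced to be zero; replacing $P$ by $I-P$ shifts the result only by the specific scalar $\|A\|I$, not by an arbitrary multiple of $I$, so the claim that the leftover $\tau(A)I$ can be ``folded into the coefficients'' does not follow from anything you have set up.

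For contrast, the paper avoids both problems by a different route. It splits $\M\cong M_2(\CCC)\otimes\N$ along a masa containing $A$, so that $A-I=\operatorname{diag}(A_1,A_2)$, and writes $A$ as the sum of $\operatorname{diag}(X^*X,\,2I-XX^*)$ and $\operatorname{diag}(-XX^*-A_2+I,\,XX^*+A_2-I)$, where $A_1+A_2=X^*X-XX^*$ comes from Theorem~\ref{commutator} with $X$ chosen invertible and $X^*X>3I$. The second summand is of the form $\operatorname{diag}(-C,C)$, which genuinely satisfies the hypothesis of Lemma~\ref{lemma: two proj sum} (conjugation by the flip unitary negates it) and gives two projections; the first summand absorbs the identity via Nishio's explicit two-projection representation of $\operatorname{diag}(S,-S)+I$ for $S=X^*X-I$ positive and invertible --- and it is precisely the invertibility bought by the commutator theorem that makes that representation available. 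If you want to salvage your approach, you would need an actual decomposition $B=A_1+A_2$ into anticommuting-type pieces (which is not supplied by the averaging identity) together with a genuine mechanism for absorbing $\tau(A)I$ into one of the two-projection combinations.
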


\begin{proof}
When $\tau(A) \neq 0$, by considering $\frac{A}{\tau(A)}$, we may assume that $\tau(A)=1$. The proof for the case when $\tau(A)=0$ is similar.
By considering $R_A \M R_A$, we may assume that $R_A=I$. Let $\A$ be a maximal abelian von Neumann algebra containing $A$. Since $\M$ is a type ${\rm II}_1$ factor, $\A$ is diffuse. Thus, there exist projections $E_1,E_2 \in \A$ such that $E_1+E_2=I$ and $E_1 \sim E_2$. Note that $\M \cong M_2(\CCC) \otimes E_1 \M E_1$. Denote $\N:=E_1\M E_1$. Therefore, $A-I$ can be written as
\[A-I=
\begin{pmatrix}
A_1  & 0\\
0 & A_2
\end{pmatrix} \in M_2(\CCC) \otimes \N,
\]
where $A_1,A_2$ are self-adjoint operators in $\N$.
Write
\[
A=
\begin{pmatrix}
A_1+A_2  & 0\\
0 & 0
\end{pmatrix}+
\begin{pmatrix}
-A_2  & 0\\
0 & A_2
\end{pmatrix}
+\begin{pmatrix}
I  & 0\\
0 & I
\end{pmatrix}
\]
Since $\tau(A-I)=0$, $\tau_{E_1}(A_1+A_2)=\frac{\tau(A_1+A_2)}{\tau(E_1)}=0$. By Theorem \ref{commutator}, there exists $X \in \N$ such that $A_1+A_2=X^*X-XX^*$. We can assume that $X$ is invertible and $X^*X>3I$, because of the invariance property of the commutator $X^*X-XX^*=(X^*+tI)(X+tI)-(X+tI)(X^*+tI),t \in \CCC$. Then
\[
A=\begin{pmatrix}
X^*X-XX^*  & 0\\
0 & 0
\end{pmatrix}
+
\begin{pmatrix}
-A_2  & 0\\
0 & A_2
\end{pmatrix}
+\begin{pmatrix}
I  & 0\\
0 & I
\end{pmatrix}
\]
\[
=\begin{pmatrix}
X^*X  & 0\\
0 & -XX^*
\end{pmatrix}
+
\begin{pmatrix}
-XX^*  & 0\\
0 & XX^*
\end{pmatrix}
+\begin{pmatrix}
-A_2  & 0\\
0 & A_2
\end{pmatrix}
+\begin{pmatrix}
I  & 0\\
0 & I
\end{pmatrix}
\]
\[
=\begin{pmatrix}
X^*X  & 0\\
0 & 2I-XX^*
\end{pmatrix}
+
\begin{pmatrix}
-XX^*-A_2+I  & 0\\
0 & XX^*+A_2-I
\end{pmatrix}.
\]
The last term of the above equality can be written as $\lambda P_1-\lambda P_2$ by Lemma \ref{lemma: two proj sum}, where $P_1,P_2 \in \M$ are projections and $\lambda \in \RRR$. So to prove this theorem, we only need to prove the first term of the above equality is a linear combination of 2 projections. This conclusion can be found in \cite{KN}. For the convenience of the reader, we will rewrite the proof.

Set $B:=\begin{pmatrix}
X^*X  & 0\\
0 & 2I-XX^*
\end{pmatrix}$. Since we may assume $X$ is invertible in $\N$, by the polar decomposition theorem, there is a unitary operator $U \in \N$ and an invertible positive operator $|X|$ such that $X=U|X|$. Then we can verify
\[
B-I= \begin{pmatrix}
X^*X-I  & 0\\
0 & I-XX^*
\end{pmatrix}
=\begin{pmatrix}
I  & 0\\
0 & U
\end{pmatrix}
\begin{pmatrix}
X^*X-I  & 0\\
0 & I-X^*X
\end{pmatrix}
\begin{pmatrix}
I  & 0\\
0 & U^*
\end{pmatrix}
\]
Since $X^*X>3I$, $X^*X-I$ is an invertible positive operator. Take any real $d$ such that $X^*X-I <dI$, and define $S_1$ and $S_2$ by
\[
S_1=\frac{1}{1+d}(X^*X-1+d(X^*X-1)^{-1}) \quad \text{and} \quad S_2=\frac{1}{d-1}(-X^*X+1+d(X^*X-1)^{-1}).
\]
Define $P_3$ and $P_4$ by
\[
P_3=\frac{1}{2}\begin{pmatrix}
I+S_1  & (I-S_1^2)^{\frac{1}{2}}\\
(I-S_1^2)^{\frac{1}{2}} & I-S_1
\end{pmatrix}
\quad \text{and} \quad
P_4=\frac{1}{2}\begin{pmatrix}
I+S_2  & (I-S_2^2)^{\frac{1}{2}}\\
(I-S_2^2)^{\frac{1}{2}} & I-S_2
\end{pmatrix}.
\]
It can be verified that $P_3,P_4$ are projections in $\M_2(\CCC)\otimes \N$.
Define $a=1+d$ and $b=1-d$. By calculation we can obtain that
\[
\begin{pmatrix}
X^*X-I  & 0\\
0 & I-X^*X
\end{pmatrix}
=aP_3+bP_4-I.
\]
Therefore, $B$ is a linear combination of 2 projections. The proof is completed.
\end{proof}

The following result answers affirmatively Question 1.1 in \cite{DNZ}.

\begin{Theorem}
Let $(\mathcal{M},\tau)$ be a finite factor, $X \in \mathcal{M}$. Then there exists a normal operator  $N \in \mathcal{M}$  and a nilpotent operator $K$   such that $X= N+ K$.
\end{Theorem}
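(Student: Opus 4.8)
The plan is to deduce the decomposition directly from the Main Theorem (Theorem~\ref{T:Main Theorem}), using the observation that we do not need to \emph{triangularize} $X$ but only to exhibit it as a sum of a normal and a nilpotent operator. First I would dispose of the type ${\rm I}_n$ case: there $\M\cong M_n(\CCC)$ and the classical Schur triangularization already writes $X=N+K$ with $N$ normal (the diagonal part) and $K$ strictly upper triangular, hence nilpotent. So I may assume $\M$ is a type ${\rm II}_1$ factor.

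Next, I would apply Theorem~\ref{T:Main Theorem} to the single operator $X$ to obtain mutually orthogonal nonzero projections $E_1,\dots,E_N$ with $\sum_{i=1}^N E_i=I$ and $E_iXE_i=\tau(X)E_i$ for every $i$. Writing $X_{ij}=E_iXE_j$, this says the diagonal blocks of $X$ are all equal to the scalar block $\tau(X)E_i$, so that
\[
X=\tau(X)I+K+L,\qquad K=\sum_{i<j}X_{ij},\quad L=\sum_{i>j}X_{ij},
\]
where $K$ is strictly block-upper-triangular and $L$ is strictly block-lower-triangular with respect to $\{E_i\}$.

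The key point is that the unwanted lower part $L$ can be folded into a self-adjoint correction. I would set
\[
N:=\tau(X)I+(L+L^*),\qquad K_0:=K-L^*,
\]
so that $X=N+K_0$. Here $N$ is normal, because $L+L^*$ is self-adjoint and $\tau(X)I$ is a scalar, so $N$ is a scalar plus a self-adjoint operator. On the other hand $L^*$ is strictly block-upper-triangular (being the adjoint of a strictly block-lower-triangular operator), hence $K_0=K-L^*$ is again strictly block-upper-triangular; a short computation shows that any strictly block-upper-triangular operator $T$ relative to $E_1,\dots,E_N$ satisfies $T^N=0$, since $T^m$ is supported on the blocks $(i,j)$ with $j-i\ge m$. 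Thus $K_0$ is nilpotent, and everything stays inside $\M$ because $L,L^*,I\in\M$.

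Given the Main Theorem, there is essentially no remaining obstacle: the entire difficulty is concentrated in Theorem~\ref{T:Main Theorem}, whose averaging conclusion forces the diagonal blocks to be scalar. The only substantive point to verify is that the upper-triangular remainder is genuinely nilpotent of finite degree $N$ — not merely quasinilpotent — which is exactly what the finiteness of the projection family $\{E_i\}$ delivers. The conceptual step worth highlighting is the realization that, since we seek a \emph{sum} $X=N+K$ rather than a unitary triangularization, the lower corner need not be eliminated but can simply be reflected into the self-adjoint part $L+L^*$.
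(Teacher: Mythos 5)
Your proposal is correct and is essentially the paper's own argument: both apply Theorem~\ref{T:Main Theorem} to make every diagonal block equal to $\tau(X)$, then absorb one triangle of the resulting block matrix into a normal operator by reflecting it across the diagonal, leaving a strictly triangular (hence nilpotent) remainder. The only cosmetic difference is that the paper keeps a strictly lower-triangular $K$ and writes $N=e^{i\theta}S$ with $S$ self-adjoint (using the polar form of $\tau(X)$), whereas you keep a strictly upper-triangular remainder and observe directly that a complex scalar plus a self-adjoint operator is normal.
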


\begin{proof}
We only need to consider that $(\M,\tau)$ is a type ${\rm II}_1$ factor. By Theorem \ref{T:Main Theorem}, there is a family of finitely many mutually orthogonal non-zero projections $\{E_i \}_{i=1} ^{N}$ in $\mathcal{M}$ such that $\sum\limits_{i=1} ^{N}E_i =I$ and $E_iXE_i=\tau(X)E_i$ for $1\leq i\leq N$. That is, $X$ can be written as follows
\[  X=
  \bordermatrix	
  {&E_1&E_2&\cdots&E_N \cr
  	E_1 & \tau(X) & X_{12} & \cdots & X_{1N} \cr
  	E_2 &X_{21}& \tau(X) & \cdots & X_{2N} \cr
  	\vdots & 	\vdots & \vdots & \ddots & \vdots\cr
  	E_N & X_{N1} & X_{N2} & \cdots &\tau(X) \cr}.
  \]
Set $\lambda:=\tau(X)$, then $\lambda=\exp(i \theta)|\lambda|$ for some $\theta$.

Let
\[
S = \begin{pmatrix}
|\lambda| & \exp(-i\theta)X_{12}& \cdots & \exp(-i\theta)X_{1N}\\
\exp(i\theta)X_{12}^{\ast}&|\lambda| & \cdots & \exp(-i\theta)X_{2N}\\
\vdots & 	\vdots & \ddots & \vdots\\
\exp(i\theta)X_{1N}^{\ast} &\exp(i\theta)X_{2N}^{\ast} & 	\cdots &|\lambda|
\end{pmatrix},
\]
then $S$ is self-adjoint.

Define
\[
K=X-\exp(i \theta)
\begin{pmatrix}
|\lambda| & \exp(-i\theta)X_{12}& \cdots & \exp(-i\theta)X_{1N}\\
\exp(i\theta)X_{12}^{\ast}&|\lambda| & \cdots & \exp(-i\theta)X_{2N}\\
\vdots & 	\vdots & \ddots & \vdots\\
\exp(i\theta)X_{1N}^{\ast} &\exp(i\theta)X_{2N}^{\ast} & 	\cdots &|\lambda|
\end{pmatrix}
\]
\[
=\begin{pmatrix}
0 & 0& \cdots & 0\\
X_{21}-\exp(2i\theta)X_{12}^{\ast}&0 & \cdots & 0\\
\vdots & 	\vdots & \ddots & \vdots\\
X_{N1}-\exp(2i\theta)X_{1N}^{\ast} &X_{N2}-\exp(2i\theta)X_{2N}^{\ast} & 	\cdots &0
\end{pmatrix}.
\]
Then $K$ is a  strictly lower-triangular operator and therefore nilpotent. Write $N=\exp(i\theta) S$. Then $N$ is normal and $X=N+K$.

\end{proof}

\end{CJK}

\end{document}